\begin{document}
\title{ Singularity of  macroscopic variables near boundary for gases  with  cut-off inverse-power potential      }
\author[IC]{I-Kun Chen}
\address{Department of Mathematics, National Taiwan University, Taipei 10617, Taiwan }
\email{ikun.chen@gmail.com}
\author[CH]{Chun-Hsiung Hsia}
\address{Institute of Applied Mathematical Sciences, National Taiwan University, Taipei 10617, Taiwan }
\email{willhsia@math.ntu.edu.tw}
\date{\today}
\begin{abstract}
In this article, the boundary singularity for stationary solutions of the linearized Boltzmann equation with cut-off inverse power potential is analyzed. In particular, for cut-off hard-potential cases, we establish the asymptotic approximation for the gradient of the moments. Our analysis indicates the logarithmic singularity of the gradient of the moments.
\end{abstract}
 \maketitle
 \section{introduction}
 The Boltzmann equation is 
 \begin{equation}\frac{\partial F}{\partial t}+\xi \cdot\frac{\partial F}{\partial t}=Q(F,F),\end{equation}
 where $F=F(t,x,\xi)$.   $Q$ above  is the collision operator only involves velocity as follows:
 \begin{align}\label{QFF} Q(F,F)&=\int \int_0^{2\pi}\int_0^{\frac{\pi}2}(F'F_*'-FF_*)B(|V|,\theta) d\theta d \epsilon d\xi_*,\end{align}
 where $V=\xi_*-\xi$ and $\alpha$ is a unit vector on a hemisphere  parametrized by $\theta$ and $\epsilon$ such that $\alpha\cdot V=|V|\cos\theta $ and
 \begin{align}
 &F=F(\xi),\ \ F_*=F(\xi_*),\ F'=F(\xi'),\ \ F_*'=F(\xi_*'),\\&
 \xi'=\xi+(\alpha\cdot V)\alpha,\\&
 \xi_*'=\xi_*-(\alpha\cdot V)\alpha.
 \end{align}
 The $B(|V|,\theta)\geq 0$ is called the cross-section. If we consider inverse power force between particles, i.e.,  $Force= \frac1{r^s}$,
 then the  cross-section is in the form
 \begin{equation}
B(|V|,\theta)=|V|^{\gamma}\beta(\theta),
 \end{equation}
 where  $\gamma=\frac{s-5}{s-1}$. The fact  $\beta\sim (\frac{\pi}2-\theta)^{-\frac{s+1}{s-1}}$ as $\theta\to \frac{\pi}2$, which is not integrable in $\theta$, makes us unable to separate \ref{QFF} into gain and lost parts.  To avoid this mathematical difficulty, it was Grad's idea,  \cite{Grad}, to consider the cross-section such that 
 \begin{equation}
 B(|V|,\theta)\leq C|V|^{\gamma}\cos\theta\sin\theta.
 \end{equation}
 We will refer these cases as Grad's angular cut-off potential. 
 In particular, in our research we will consider the cases that $B$ as a product of a function of $|V| $ and one of $\theta$, i.e.,
  \begin{equation}\label{IP}\begin{split}
 B(|V|,\theta)=|V|^{\gamma}\beta(\theta),\ \ 
\beta(\theta)\leq C\cos\theta\sin\theta.\end{split}
 \end{equation}
 To make distinctions, we will refer the cases above, \eqref{IP}, as  power-law potential with angular cut-off in this paper. 
 We first non-dimensialize the equation so that the Maxwellian  we are interested becomes the standard one:
 \begin{align}w=\frac{1}{\sqrt{\pi}^3}e^{-{|\zeta|^2}}.\end{align}

  We  linearize the  equation around standard Maxwellian  so that
  \begin{align}F=w+w^{\frac12}f.\end{align}
  We have,

  \begin{equation}sh\frac{\partial f}{\partial t}+\zeta \cdot\frac{\partial f}{\partial t}=\frac1{\kappa}L(f),\end{equation}
  where $L(f)=2w^{-\frac12}Q(w,w^\frac12f).$ Under the assumption of Grad's angular cut-off, the linearized collision operator can be decomposed into a damping multiplicative operator $-\nu$ and a smoothing integral operator $K$: \begin{equation}L(\phi)(\zeta)=-\nu(\zeta)\phi(\zeta)+K(\phi)(\zeta).\label{decomposeL}\end{equation} 
The the following properties of the linearized collision operator were studied by Grad \cite{Grad} and Caflisch \cite{Caflisch}.   The collision frequency satisfies the following estimate
  \begin{equation}\nu_0(1+|\zeta|)^\gamma\leq \nu(\zeta)\leq \nu_1(1+|
  \zeta|)^\gamma,
  \end{equation}
  where $0<\nu_0<\nu_1$ and $-2\leq\gamma\leq1$ is a parameter from interaction between particles.  $\gamma=1$ is called the hard sphere model; $\gamma=0$ is called the Maxwellian case. Positive $\gamma$'s corresponded to hard potential; negative $\gamma$'s correspond to soft-potential. 
  
If we  consider power-law potential with angular cut-off, we have further properties :\begin{equation}\nu(\zeta)\mbox{ is a function of } |\zeta|.
 \end{equation}
 \begin{equation} \label{dKLp}\Vert \partial_{\zeta_i} K(f)\Vert_{L^p}\leq C\Vert f\Vert_{L^p}, \ p\geq 1.\end{equation} 
  
  In this paper, we restrict our study to the cases $0<\gamma\leq1$.  We define 
   \begin{equation}\Vert f(\zeta)\Vert_{L^\infty_a}=\sup_{\zeta}(1+|\zeta|)^a|f(\zeta)|.\end{equation} 
   
   The integral operator  improves the decay:
   \begin{align}
&\label{L2Linfty}\Vert K(f)\Vert_{L^\infty_{\frac32-\gamma}} \leq C\Vert f\Vert_{L^2}, \\ &\Vert K(f)\Vert_{L^\infty_{2+a-\gamma}} \leq C\Vert f\Vert_{L_a^\infty}.\label{Linfty}\end{align}

  We consider the stationary equation:
   
    \begin{equation}
 \zeta_1\partial_xf(x,\zeta)=L(f)(\zeta),  \mbox{for}  \  0<x\leq l. \label{localf}
  \end{equation}  
  The functional space we are considering is as follows:
  
  \begin{definition}\begin{align} L^*_\zeta(\mathbb R^3) &=\{ f,  \Vert f(\zeta)\Vert_*<\infty\},
  \end{align}where \begin{equation}  \Vert f(\zeta)\Vert_*=\Big (\int f^2(\zeta)\nu(\zeta)d\zeta\Big)^{\frac12}.\end{equation}
  Also, \begin{equation}|||f|||:=\sup_{0\leq x\leq l}||f||_*.\end{equation}
  \end{definition}
 We say $f\in L^\infty_x\big([0,l],L_\zeta^*(\mathbb{R}^3)\big)$ is a solution to \eqref{localf} if it satisfies the following integral equation:
 
  \begin{equation}\label{Integraleq1}
  f(x,\zeta)=\left\{\begin{array}{ll}e^{-\frac{\nu(\zeta)}{|\zeta_1|}x}f(0,\zeta)+\int_0^x\frac1{|\zeta_1|}e^{-\frac{\nu(\zeta)}{|\zeta_1|}(x-s)}K(f)(s,\zeta)ds, & \mbox{for}\  \zeta_1>0,\\ e^{-\frac{\nu(\zeta)}{|\zeta_1|}(l-x)}f(l,\zeta)+\int_x^l\frac1{|\zeta_1|}e^{-\frac{\nu(\zeta)}{|\zeta_1|}(s-x)}K(f)(s,\zeta)ds, & \mbox{for} \ \zeta_1<0.\end{array}\right.\end{equation}
 \begin{remark} 
 The solution spaces for both Milne and Kramar's problems given in \cite{GP} are in $ L^\infty_x\big([0,l],L_\zeta^*(\mathbb{R}^3)\big)$ if  $x$ is restricted  to $[0,l].$
 \end{remark} 
 
     The moments are defined as follows:
  \begin{definition}\label{moments}The $\alpha$ moment is defined as
  \begin{equation}
  \sigma_\alpha(x)=\int f(x,\zeta)\phi_\alpha (\zeta)d\zeta,
  \end{equation}
  where \begin{equation}\notag
\alpha=(\alpha_1,\alpha_2,\alpha_3),    \mbox{ $\alpha_i'$s are nonnegative  integers,  }\end{equation}and\begin{equation}
\phi_\alpha(\zeta)=\zeta^\alpha E^{\frac12}=\pi^{-\frac34}\zeta_1^{\alpha_1}\zeta_2^{\alpha_2}\zeta_3^{\alpha_3}e^{-\frac{|\zeta|^2}2}.\end{equation}
  \end{definition} 
  We introduce a constant depending on $\alpha$:
  \begin{definition} Set
  \begin{align}A_\alpha&=(2\alpha_1)^{\frac{\alpha_1}2}(2\alpha_2)^{\frac{\alpha_2}2}(2\alpha_3)^{\frac{\alpha_3}2}e^{-\frac{\alpha_1+\alpha_2+\alpha_3}2}, 
  \end{align}where we follow the convention $0^0=1$.
  \label{defaalpha}
  \end{definition}
   The macroscopic variables are obtained through the moments. For example, $\sigma_{(0,0,0)}$ is the density, $\sigma_{(1,0,0)}$ is the  flow velocity in $x_1$ direction, and $\frac23(\sigma_{(2,0,0)}+\sigma_{(0,2,0)}+\sigma_{(0,0,2)})-\sigma_{(0,0,0)}$ is the temperature.
   The following inequality will be frequently used later :\begin{equation}|\phi_\alpha|\leq C A_\alpha e^{-\frac{|\zeta|^2}{4}}.\end{equation}
  
The Main Theorem is as follows \begin{theorem}\label{mainthm}Suppose $f\in L^\infty_x\big([0,l],L_\zeta^*(\mathbb{R}^3)\big)$ is a solution to \eqref{localf}  with power law potential with angular cut-off with $0<\gamma\leq1$ and
  $  \nabla f(0,\zeta)\in L^p_\zeta(\mathbb{R}^{3+})$ for $p>1$, $f(0,\zeta)\in  L^ \infty_\zeta(\mathbb{R}^{3+})$ , and $f(l,\zeta)\in  L^ \infty_\zeta(\mathbb{R}^{3-}) $.  Then,  for $x$ small enough, 
 \begin{equation}\begin{split}|\partial_x\sigma_\alpha(x)|= &-\ln x\int\int \phi_\alpha(0,\zeta_2,\zeta_3)L(f)(0,0^+,\zeta_2,\zeta_3)d\zeta_2d\zeta_3\\&+O(A_\alpha \langle f \rangle'),\end{split}
\end{equation}  where
\begin{equation}
L(f)(0,0^+,\zeta_2,\zeta_3):=\lim_{\zeta_1\to 0^+}L(f)(0,\zeta_1,\zeta_2,\zeta_3)\end{equation}  and
\begin{equation}\langle f\rangle':=1+|||f|||+\Vert f(0,\zeta)\Vert_{L^ \infty_\zeta(\mathbb{R}^{3+})}+\Vert f(l,\zeta)\Vert_{L^ \infty_\zeta(\mathbb{R}^{3-})}+\Vert \nabla f(0,\zeta)\Vert_{L^ p_\zeta(\mathbb{R}^{3+})}.\end{equation}\end{theorem}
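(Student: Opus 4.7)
The plan is to differentiate the integral representation \eqref{Integraleq1} in $x$, integrate by parts in $s$ so as to trade an apparent $\zeta_1^{-2}$ kernel for a milder $\zeta_1^{-1}$ one, and then extract the $-\ln x$ asymptotic from a single boundary piece while bounding everything else by $O(A_\alpha\langle f\rangle')$.

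First, for $\zeta_1>0$, differentiating the first branch of \eqref{Integraleq1} in $x$ gives three terms, and integration by parts in $s$ on the convolution (via $\partial_s e^{-\nu(x-s)/\zeta_1}=(\nu/\zeta_1)e^{-\nu(x-s)/\zeta_1}$) cancels the pointwise $K(f)(x,\zeta)/\zeta_1$ contribution. The resulting $s=0$ boundary value combines with $-\nu(\zeta)f(0,\zeta)/\zeta_1$ to assemble $L(f)(0,\zeta)/\zeta_1$, yielding
\begin{equation*}
\partial_x f(x,\zeta)=\frac{e^{-\nu(\zeta)x/\zeta_1}}{\zeta_1}L(f)(0,\zeta)+\frac{1}{\zeta_1}\int_0^x e^{-\nu(\zeta)(x-s)/\zeta_1}\partial_s K(f)(s,\zeta)\,ds,
\end{equation*}
with an analogous identity for $\zeta_1<0$ using $L(f)(l,\zeta)$ on the interval $(x,l)$. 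Integrating against $\phi_\alpha$ splits $\partial_x\sigma_\alpha(x)$ into a boundary piece $B_+(x)$ and an interior piece $I_+(x)$ from $\zeta_1>0$, together with their $\zeta_1<0$ counterparts $B_-(x)$ and $I_-(x)$.

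Second, for the leading piece $B_+(x)=\int_{\zeta_1>0}\zeta_1^{-1}e^{-\nu(\zeta)x/\zeta_1}L(f)(0,\zeta)\phi_\alpha(\zeta)\,d\zeta$, I split the $\zeta_1$-integration at the scales $\zeta_1\sim\nu x$ and $\zeta_1\sim 1$: on the middle range the exponential is essentially $1$ and $\int_{\nu x}^{1}d\zeta_1/\zeta_1=-\ln(\nu x)$, while the tangential $(\zeta_2,\zeta_3)$-integration freezes $L(f)$ and $\phi_\alpha$ at $\zeta_1=0^+$, producing exactly the announced $-\ln x$ coefficient. The two outer ranges contribute $O(A_\alpha\langle f\rangle')$ uniformly using $|\phi_\alpha|\leq CA_\alpha e^{-|\zeta|^{2}/4}$ and the $L^\infty$ control on $f(0,\cdot)$. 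The piece $B_-(x)$ contains $e^{-\nu(l-x)/|\zeta_1|}$ with $l-x$ bounded below for small $x$, so it is an ordinary $O(A_\alpha\langle f\rangle')$ quantity without any logarithm.

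Finally, the interior pieces $I_\pm$ are handled by Fubini in $(s,\zeta)$ and the same kernel analysis applied pointwise in $s$, giving $|I_\pm(x)|\leq C\int_0^{x}|\ln(x-s)|\Lambda(s)\,ds=O(x|\ln x|)$, where $\Lambda(s)$ controls the tangential $\phi_\alpha$-weighted integral of $\partial_s K(f)(s,\cdot)$. Uniform control of $\Lambda$ is the main technical obstacle: writing $\partial_s K(f)=K(\partial_s f)$ and using $\partial_s f=L(f)/\zeta_{*1}$ reintroduces a $\zeta_{*1}^{-1}$ inside the $K$-integral. I intend to resolve this by substituting \eqref{Integraleq1} into $\partial_s f$ itself, producing a representation on which the smoothing estimates \eqref{dKLp}, \eqref{L2Linfty}, \eqref{Linfty} apply, and by exploiting the hypothesis $\nabla f(0,\cdot)\in L_\zeta^p(\mathbb R^{3+})$ with $p>1$ to bound the boundary contribution. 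Combining these bounds with the $B_+$ analysis produces the stated asymptotic.
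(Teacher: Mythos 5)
Your overall skeleton matches the paper's: isolate the boundary piece $\frac{1}{\zeta_1}e^{-\nu(\zeta)x/\zeta_1}L(f)(0,\zeta)$ as the source of the logarithm and bound the rest. But there are two genuine gaps. The first is the interior piece. Your integration by parts in $s$ produces $\int_0^x e^{-\nu(x-s)/\zeta_1}\partial_s K(f)(s,\zeta)\,ds/\zeta_1$, and you yourself note that $\partial_s K(f)=K(\partial_s f)$ with $\partial_s f=L(f)/\zeta_{*1}$ reintroduces a $\zeta_{*1}^{-1}$ which is \emph{not} integrable against the kernel $k(\zeta,\zeta_*)$ (the codimension-one singularity $1/|\zeta_{*1}|$ already makes the naive bound diverge logarithmically), so the existence and uniform control of your $\Lambda(s)$ is precisely the hard analytic content, and your proposal defers it to an unproved ``substitute \eqref{Integraleq1} into $\partial_s f$'' step. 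The paper avoids differentiating $K(f)$ in $x$ altogether: it keeps the differences $K(f)(x,\zeta)-K(f)(s,\zeta)$ (see \eqref{dfdxf}), expands them via \eqref{Kexpres}, and proves a H\"older modulus \eqref{H2Holder} through Lemma \ref{lemmatheta} together with the interpolation bounds \eqref{interpoK}--\eqref{KfLinfh}; this machinery, which occupies all of Section \ref{section2} and yields \eqref{lmGrad2}--\eqref{lmGrad3}, has no counterpart in your sketch, and without it the interior estimate is not established.

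The second gap is in your extraction of the coefficient of $-\ln x$. On the middle range $\nu x\lesssim\zeta_1\lesssim 1$ you ``freeze'' $L(f)(0,\zeta)$ at $\zeta_1=0^+$; for the resulting error to be $O(1)$ rather than $O(|\ln x|)$ you need a quantitative modulus of continuity of $L(f)(0,\cdot)$ in $\zeta_1$, uniform (after weighting) in $(\zeta_2,\zeta_3)$, whose integral against $d\zeta_1/\zeta_1$ converges. The hypothesis only gives $\nabla f(0,\cdot)\in L^p_\zeta$ with some $p>1$, hence via \eqref{dKLp} only $L^p$ control of $\nabla L(f)(0,\cdot)$ — this does not justify pointwise freezing as stated. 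The paper's Lemma \ref{DsigmaS} handles exactly this point: it passes to spherical coordinates (using that $\nu$ depends only on $|\zeta|$ for the cut-off power-law potential), integrates by parts in $z=\cos\theta$ so that the boundary term produces the exponential integral $E_1(\nu x/\rho)=-\ln x+O(1)$, and pairs the remaining $|\ln z|$ weight with $\partial_\zeta L(f)(0,\cdot)$ by H\"older, using $\int_0^1|\ln z|^q\,dz<\infty$; this is precisely how the weak assumption $p>1$ becomes sufficient. To complete your argument you would either have to reproduce this angular integration by parts (or an equivalent averaging argument) for the freezing step, and supply the H\"older-continuity estimates for $K(f)$ in $x$ for the interior piece.
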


We first investigate the problem for Grad's angular cut-off potential.  We consider the the distribution function for $\zeta_1>0$. The case for $\zeta_1<0$ can be treated  similarly.  Differentiating \eqref{Integraleq1} for $\zeta_1>0$, we have

\begin{equation}\begin{split}\label{Dsigmap}
\frac{\partial}{\partial x}f(x,\zeta)=&-\frac{\nu(\zeta)}{|\zeta_1|}e^{-\frac{\nu(\zeta)}{|\zeta_1|}x}f(0,\zeta)+\frac{1}{|\zeta_1|}K(f)(x,\zeta)\\&-\int_0^x\frac{\nu(\zeta)}{|\zeta_1|^2}e^{-\frac{\nu(\zeta)}{|\zeta_1|}(x-s)}K(f)(s,\zeta)ds.\end{split} \end{equation} 
We  observe that the first  term is nice in $\zeta$ when $x$ is away from zero and has a singularity at  $x=0$. On the other hand, the second and third terms have  factors $|\zeta_1|^{-1}$ and $|\zeta_1|^{-2}$ in $\zeta$, which cause a  difficulty in our analysis.  In order to overcome this difficulty, we reorganize  the equation \eqref{Dsigmap}:
\begin{equation}\label{dfdxf}\begin{split}
\frac{\partial}{\partial x}f(x,\zeta)=&-\frac{\nu(\zeta)}{|\zeta_1|}e^{-\frac{\nu(\zeta)}{|\zeta_1|}x}f(0,\zeta)+\frac{1}{|\zeta_1|}e^{-\frac{\nu(\zeta)}{|\zeta_1|}x}K(f)(x,\zeta)\\&+\int_0^x\frac{\nu(\zeta)}{|\zeta_1|^2}e^{-\frac{\nu(\zeta)}{|\zeta_1|}(x-s)}\big(K(f)(x,\zeta)-K(f)(s,\zeta)\big)ds\\&{=\frac{1}{|\zeta_1|}e^{-\frac{\nu(\zeta)}{|\zeta_1|}x}L(f)(0,\zeta)+\frac{1}{|\zeta_1|}e^{-\frac{\nu(\zeta)}{|\zeta_1|}x}(K(f)(x,\zeta)-K(f)(0,\zeta))}\\&{+\int_0^x\frac{\nu(\zeta)}{|\zeta_1|^2}e^{-\frac{\nu(\zeta)}{|\zeta_1|}(x-s)}\big(K(f)(x,\zeta)-K(f)(s,\zeta)\big)ds.}\end{split}
\end{equation} 
The contribution from the second and third terms is uniformly bounded. Roughly speaking,  we organize the equation in such a way in order to use the "H{\"o}lder continuity" of $K(f)$ in $x$ to obtain  "differentiability" of $f$  in a very  week sense, which will be explained in detail in Section \ref{section2}. Also in Section \ref{section2}, we will deal with the contribution from $\zeta_1<0$.  In section \ref{section3}, with further assumption of invers-power potential with angular cut-off and regularity on boundary data, we can extract the 
singularity from the contribution of the first term on the right hand side of  \eqref{dfdxf}, which concludes the Theorem \ref{mainthm}.

\section{Upper Bound Estimates \label{section2} }
As outlined in the introduction,  the goal of this section is to prove \begin{lemma}\label{lmGrad}Suppose $f\in L^\infty_x\big([0,l],L_\zeta^*(\mathbb{R}^3)\big)$ is a solution to \eqref{localf} with Grad's angular cut-off potential with $0<\gamma\leq 1$ and $f(0,\zeta)\in  L^ \infty_\zeta(\mathbb{R}^{3+})$  and $f(l,\zeta)\in  L^ \infty_\zeta(\mathbb{R}^{3-}) $.

  Then,
\begin{align}\label{lmGrad1}
& | \int_{\zeta_1>0}\phi_\alpha\frac{1}{|\zeta_1|}e^{-\frac{\nu(\zeta)}{|\zeta_1|}x}L(f)(0,\zeta)d\zeta| \leq C(| \ln x|+1)A_\alpha\langle f \rangle,\\&\label{lmGrad2}
| \int_{\zeta_1>0}\phi_\alpha\frac{1}{|\zeta_1|}e^{-\frac{\nu(\zeta)}{|\zeta_1|}x}(K(f)(x,\zeta)-K(f)(0,\zeta))d\zeta|\leq  CA_\alpha\langle f \rangle,\\&| \int_{\zeta_1>0}\phi_\alpha\int_0^x\frac{\nu(\zeta)}{|\zeta_1|^2}e^{-\frac{\nu(\zeta)}{|\zeta_1|}(x-s)}\big(K(f)(x,\zeta)-K(f)(s,\zeta)\big)dsd\zeta|\leq CA_\alpha\langle f \rangle \label{lmGrad3}
, \end{align}

where
\begin{equation}
\langle f \rangle:=|||f|||+\Vert f(0,\zeta)\Vert_{L^ \infty_\zeta(\mathbb{R}^{3+})}+\Vert f(l,\zeta)\Vert_{L^ \infty_\zeta(\mathbb{R}^{3-})}.
\end{equation}\end{lemma}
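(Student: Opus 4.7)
\emph{Preliminary bounds and \eqref{lmGrad1}.} The estimate \eqref{L2Linfty} together with $\nu\ge\nu_0$ yields $\|K(f)(s,\cdot)\|_{L^\infty_{3/2-\gamma}}\le C|||f|||$ uniformly in $s\in[0,l]$; substituting into \eqref{Integraleq1} and invoking the boundary bounds on $f(0,\cdot)$ and $f(l,\cdot)$ gives $\|f(x,\cdot)\|_{L^\infty(\mathbb R^3)}\le C\langle f\rangle$ uniformly in $x\in[0,l]$. Consequently $|L(f)(0,\zeta)|\le \nu(\zeta)|f(0,\zeta)|+|K(f)(0,\zeta)|\le C\langle f\rangle(1+|\zeta|)^\gamma$. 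Inserting this into the left-hand side of \eqref{lmGrad1}, using $|\phi_\alpha|\le CA_\alpha e^{-|\zeta|^2/4}$ to absorb the polynomial factor, replacing $\nu(\zeta)$ by $\nu_0$ in the exponential, and integrating out $\zeta_2,\zeta_3$ reduces the matter to $\int_0^\infty \zeta_1^{-1}e^{-c\zeta_1^2}e^{-\nu_0 x/\zeta_1}d\zeta_1$. The contribution from $\zeta_1\ge 1$ is $O(1)$; on $[0,1]$ the substitution $u=\nu_0 x/\zeta_1$ recognizes the integral as $\Gamma(0,\nu_0 x)=O(|\ln x|+1)$.

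\emph{Lipschitz-in-$x$ estimate for $f$.} The technical core for \eqref{lmGrad2} and \eqref{lmGrad3} is
\[
|f(x,\zeta^*)-f(s,\zeta^*)|\le C\langle f\rangle\,\min\!\left(1,\,\tfrac{|x-s|}{|\zeta^*_1|}\right),\qquad \zeta^*_1\neq 0,\ x,s\in[0,l].
\]
For $\zeta^*_1>0$, apply \eqref{Integraleq1} at $x$ and $s$, extract the common prefactor $e^{-\nu\min(x,s)/|\zeta^*_1|}$, and bound each piece via $|1-e^{-a}|\le\min(a,1)$ together with $\int_0^{|x-s|}|\zeta^*_1|^{-1}e^{-\nu\tau/|\zeta^*_1|}d\tau\le\min(|x-s|/|\zeta^*_1|,1/\nu)$, invoking the $L^\infty$ bound on $K(f)$ from Step 1. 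For $\zeta^*_1<0$ the two $l$-boundary contributions combine as $e^{-\nu(l-\max(x,s))/|\zeta^*_1|}(1-e^{-\nu|x-s|/|\zeta^*_1|})$: the outer exponential suppresses the small-$|\zeta^*_1|$ behavior of the inner factor, and the same minimum-type bound results, provided $x,s$ are bounded away from $l$.

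\emph{Hölder bound on $K(f)$ and the proofs of \eqref{lmGrad2}, \eqref{lmGrad3}.} Writing $K(f)(x,\zeta)-K(f)(s,\zeta)=\int k(\zeta,\zeta^*)\bigl(f(x,\zeta^*)-f(s,\zeta^*)\bigr)d\zeta^*$ and splitting the $\zeta^*$ integration at $|\zeta^*_1|=|x-s|^{1/2}$: on $\{|\zeta^*_1|\ge|x-s|^{1/2}\}$ the Step-2 bound is $\le|x-s|^{1/2}$, while on the complementary slab the Gaussian-in-$\zeta^*$ decay of Grad's kernel produces a $\zeta^*_1$-length factor of order $|x-s|^{1/2}$. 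Combining, $|K(f)(x,\zeta)-K(f)(s,\zeta)|\le C\langle f\rangle|x-s|^{1/2}\rho(\zeta)$ with $\rho$ polynomially decaying. Inserted into \eqref{lmGrad2}, the resulting $\zeta$-integral reproduces that of Step 1 multiplied by $x^{1/2}$, giving $CA_\alpha\langle f\rangle\,x^{1/2}(|\ln x|+1)\le CA_\alpha\langle f\rangle$ on $[0,l]$. For \eqref{lmGrad3}, the substitution $u=\nu(x-s)/|\zeta_1|$ converts $\int_0^x\nu|\zeta_1|^{-2}e^{-\nu(x-s)/|\zeta_1|}(x-s)^{1/2}ds$ into $\nu^{-1/2}|\zeta_1|^{-1/2}\int_0^{\nu x/|\zeta_1|}u^{1/2}e^{-u}du\le C|\zeta_1|^{-1/2}$, and the remaining $\int_{\zeta_1>0}|\phi_\alpha|\rho(\zeta)|\zeta_1|^{-1/2}d\zeta\le CA_\alpha$ is finite because $\zeta_1^{-1/2}$ is integrable near $0$.

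\emph{Anticipated main obstacle.} The most delicate point is the Lipschitz estimate for $\zeta^*_1<0$: naive differencing of the $l$-boundary terms leaves an $(e^{\nu|x-s|/|\zeta^*_1|}-1)$ that blows up as $|\zeta^*_1|\to 0^+$, and only pairing it with the surviving $l$-exponential (and restricting $x$ away from $l$) recovers the clean $\min(1,|x-s|/|\zeta^*_1|)$. A secondary concern is the slab-measure estimate $\int_{|\zeta^*_1|\le|x-s|^{1/2}}|k(\zeta,\zeta^*)|d\zeta^*\le C|x-s|^{1/2}$, which relies on the explicit Gaussian-in-$\zeta^*$ decay of Grad's kernel and should be cited from \cite{Grad,Caflisch}.
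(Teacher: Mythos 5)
Your proposal is essentially correct, and for the hard parts \eqref{lmGrad2}--\eqref{lmGrad3} it follows a genuinely different route than the paper. The paper never proves a pointwise H\"older modulus for the whole difference $K(f)(x,\zeta)-K(f)(s,\zeta)$: it substitutes the mild formulation \eqref{Integraleq1} inside $K$, splits into four terms $H_1,\dots,H_4$, obtains an $|x-s|^{\beta}$ bound (with $\beta<\gamma/(2+\gamma)$, via the weighted $L^2$ estimate of Lemma \ref{lemmatheta} in the spirit of Golse--Poupaud) only for $H_2,H_4$, and handles the boundary-data terms $H_1,H_3$ by exchanging the order of integration and interpolating the smoothing estimates \eqref{L2Linfty}, \eqref{Linfty} for $K$ against $L^p$ norms of $\phi_\alpha\nu|\zeta_1|^{-2}e^{-\nu(x-s)/|\zeta_1|}$ in weighted $L^\infty_h$ spaces. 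You instead use the uniform bound $\Vert f\Vert_{L^\infty_{x,\zeta}}\le C\langle f\rangle$ (which the paper also establishes first) to get the pointwise estimate $|f(x,\zeta_*)-f(s,\zeta_*)|\lesssim \langle f\rangle\min\bigl(1,\nu(\zeta_*)|x-s|/|\zeta_{*1}|\bigr)$ directly from \eqref{Integraleq1}, and then convert it into a pointwise H\"older-in-$x$ bound for $K(f)$ by splitting the $\zeta_*$ integral along the slab $|\zeta_{*1}|\le|x-s|^{1/2}$; \eqref{lmGrad2} and \eqref{lmGrad3} then follow by elementary integration, as you show. Your route is more elementary and yields a stronger, uniform-in-$\zeta$ modulus of continuity, but it requires the explicit pointwise structure of Grad's kernel (Gaussian decay with at worst an integrable $|\zeta-\zeta_*|^{-1}$ singularity) to justify the slab estimate, an input the paper's purely functional argument (mapping properties of $K$ plus Lemma \ref{lemmatheta}) avoids. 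Three bookkeeping points: the slab bound is really $\int_{|\zeta_{*1}|\le\delta}k(\zeta,\zeta_*)\,d\zeta_*\le C\delta(1+|\ln\delta|)$ when the slab passes near $\zeta$, not $C\delta$, which still suffices downstream; your Lipschitz bound carries a factor $\nu(\zeta_*)\sim(1+|\zeta_*|)^{\gamma}$ that must be absorbed by the decay of $k(\zeta,\zeta_*)$, which it is; and the restriction ``$x,s$ bounded away from $l$'' for $\zeta_{*1}<0$ is unnecessary, since $e^{-\nu(l-\max(x,s))/|\zeta_{*1}|}\bigl(1-e^{-\nu|x-s|/|\zeta_{*1}|}\bigr)\le\min\bigl(1,\nu|x-s|/|\zeta_{*1}|\bigr)$ holds on all of $[0,l]$.
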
 

\begin{proof}

We observe that $f$ is in fact bounded for all $x$ and $\zeta$ if $f(0,\zeta)\in  L^ \infty_\zeta(\mathbb{R}^{3+})$  and $f(l,\zeta)\in  L^ \infty_\zeta(\mathbb{R}^{3-}) $. For  $\zeta_1>0$,
\begin{equation}\begin{split} |f(x,\zeta)|&\leq |f(0,\zeta)|+C\Vert f\Vert_*\int_0^x\frac1{|\zeta_1|}e^{-\frac{\nu(\zeta)}{|\zeta_1|}(x-s)}ds\\&\leq \Vert f(0,\zeta)\Vert_{L^ \infty_\zeta(\mathbb{R}^{3+})}+C\Vert f\Vert_*\nu_0^{-1}\int_0^{\frac{x}{|\zeta_1|}}e^{-z}dz\\&\leq C( \Vert f(0,\zeta)\Vert_{L^ \infty_\zeta(\mathbb{R}^{3+})}+\Vert f\Vert_*).\end{split}\end{equation}
 A similar  inequality holds for $\zeta_1<0$. Therefore, 
\begin{equation} \Vert f \Vert_{L^\infty_{x,\zeta}} \leq C\langle f \rangle.\end{equation}

 We observe
 \begin{equation}\begin{split}
 &| \int_{\zeta_1>0}\phi_\alpha\frac{1}{|\zeta_1|}e^{-\frac{\nu(\zeta)}{|\zeta_1|}x}L(f)(0,\zeta)d\zeta| \\&\leq CA_\alpha\langle f \rangle|\int_{\zeta_1>0}e^{-\frac{|\zeta|^2}4}\frac{1}{|\zeta_1|}e^{-\frac{\nu(\zeta)}{|\zeta_1|}x}d\zeta|\leq CA_\alpha\langle f \rangle(|\ln x|+1),\end{split}
 \end{equation}
 which concludes \eqref{lmGrad1}

We will present the proof for \eqref{lmGrad3}.  

Replacing $0$, $l$ by $s$, $x$ in the \eqref{Integraleq1}, we can derive
\begin{equation}\begin{split}\label{Kexpres}
&K(f)(x,\zeta)-K(f)(s,\zeta)=\int_{\zeta_{*1}>0}k(\zeta,\zeta_*)(e^{-\frac{\nu(\zeta_*)}{|\zeta_{*1}|}|x-s|}-1)f(s,\zeta_*)d\zeta_*\\&+\int_{\zeta_{*1}>0}k(\zeta,\zeta_*)\int^x_{s}\frac1{|\zeta_{*1}|}e^{-\frac{\nu(\zeta_*)}{|\zeta_{*1}|}|x-t|}K(f)(t,\zeta_*)dtd\zeta_*\\&+\int_{\zeta_{*1}<0}k(\zeta,\zeta_*)(1-e^{-\frac{\nu(\zeta_*)}{|\zeta_{*1}|}|x-s|})f(x,\zeta_*)d\zeta_*\\&+\int_{\zeta_{*1}<0}k(\zeta,\zeta_*)\int^x_{s}\frac1{|\zeta_{*1}|}e^{-\frac{\nu(\zeta_*)}{|\zeta_{*1}|}|s-t|}K(f)(t,\zeta_*)dtd\zeta_*\\&=: H_1+H_2+H_3+H_4.
\end{split}\end{equation}
The term $H_2$ and $H_4$ have the property to be proved later
\begin{equation}\label{H2Holder}
|H_2|\leq C||f||_*|x-s|^{\beta}, \ |H_4|\leq C||f||_*|x-s|^{\beta},
\end{equation}
where $0<\beta<\frac{\gamma}{2+\gamma}$. 
We let \begin{equation}\begin{split} &\int_{\zeta_1>0}\int_0^x\phi_\alpha(\zeta)\frac{\nu(\zeta)}{|\zeta_1|^2}e^{-\frac{\nu(\zeta)}{|\zeta_1|}(x-s)}\big(K(f)(x,\zeta)-K(f)(s,\zeta)\big)dsd\zeta\\&=\int_{\zeta_1>0}\int_0^x\phi_\alpha(\zeta)\frac{\nu(\zeta)}{|\zeta_1|^2}e^{-\frac{\nu(\zeta)}{|\zeta_1|}(x-s)}\big(H_1+H_2+H_3+H_4)dsd\zeta \\&=:B_1+B_2+B_3+B_4.\end{split}\end{equation}Therefore, for $i=2$ and $4,$

\begin{equation}\begin{split}
&|B_i|\leq CA_{\alpha}||f||_*|\int_{\zeta_1>0}e^{-\frac{|\zeta|^2}4}\int_0^x\frac{\nu(\zeta)}{|\zeta_1|^2}e^{-\frac{\nu(\zeta)}{|\zeta_1|}(x-s)}|x-s|^\beta
 ds d\zeta|\\&\leq CA_\alpha ||f||_*|\int_{\zeta_1>0}e^{-\frac{|\zeta|^2}4}{\nu(\zeta)^{-\beta}}|\zeta_1|^{\beta-1}\int_0^{\frac{\nu(\zeta)x}{|\zeta_1|}}z^{\beta}e^{-z}
 dz d\zeta|\\&\leq CA_\alpha ||f||_*,\end{split}\end{equation}
where $z=\frac{\nu(\zeta)}{|\zeta_1|}(x-s)$. 
Estimates for $B_1$ and $B_3$ are not so obvious and the analysis is more demanding. We shall present the case for $B_1$ only and the case for $B_3$ can be done similarly. We claim
\begin{equation}\begin{split}
&|B_1|=\\&|\int_{\zeta_1>0}\phi_\alpha\int_0^x\frac{\nu(\zeta)}{|\zeta_1|^2}e^{-\frac{\nu(\zeta)}{|\zeta_1|}(x-s)}
 \int_{\zeta_{*1}>0}k(\zeta,\zeta_*)(e^{-\frac{\nu(\zeta_*)}{|\zeta_{*1}|}|x-s|}-1)f(s,\zeta_*)d\zeta_*ds d\zeta|\\&\leq CA_\alpha \langle f\rangle.\end{split}\end{equation}
 
Change the order of integration, we have

\begin{equation}\begin{split}
&|B_1|=\\&|\int_0^x\int_{\zeta_{*1}>0}\Big(\int_{\zeta_1>0}k(\zeta,\zeta_*)\phi_\alpha\frac{\nu(\zeta)}{|\zeta_1|^2}e^{-\frac{\nu(\zeta)}{|\zeta_1|}(x-s)}
 d\zeta\Big)(e^{-\frac{\nu(\zeta_*)}{|\zeta_{*1}|}|x-s|}-1)f(s,\zeta_*)d\zeta_*ds|.\end{split}\end{equation}
 
 We observe, for $a\geq0$
 \begin{align}
&\Vert\phi_\alpha\frac{\nu(\zeta)}{|\zeta_1|^2}e^{-\frac{\nu(\zeta)}{|\zeta_1|}(x-s)}\Vert_{L_a^1} \leq C_aA_\alpha |x-s|^{-1}(1+\big|\ln|x|\big|),\\&\Vert\phi_\alpha\frac{\nu(\zeta)}{|\zeta_1|^2}e^{-\frac{\nu(\zeta)}{|\zeta_1|}(x-s)}\Vert_{L_a^\infty} \leq C_aA_\alpha |x-s|^{-2}\label{phiainf}.\end{align}
Interpolating the inequalities above, we obtain
\begin{equation}\begin{split}\label{interf}
&\Vert\phi_\alpha\frac{\nu(\zeta)}{|\zeta_1|^2}e^{-\frac{\nu(\zeta)}{|\zeta_1|}(x-s)}\Vert_{L_a^p} \leq C_aA_\alpha |x-s|^{-2+\frac1p}(1+\big|\ln|x|\big|)^{\frac1p},\end{split}\end{equation}
where $1\leq p \leq \infty.$ In particular,
\begin{equation}\label{L2phialpha}
\Vert\phi_\alpha\frac{\nu(\zeta)}{|\zeta_1|^2}e^{-\frac{\nu(\zeta)}{|\zeta_1|}(x-s)}\Vert_{L^2} \leq C A_{\alpha} |x-s|^{-\frac32}(1+\big|\ln|x|\big|)^{\frac12}.
\end{equation}

Let $h(\theta,\gamma,a)=(\frac32-\gamma)\theta+(2+a-\gamma)(1-\theta)=2+a(1-\theta)-\gamma-\frac12\theta$, where $0\leq\theta\leq1$. We have \begin{equation} \begin{split}\Vert K(f)\Vert_{L^\infty_h}=&\sup_{\zeta\in\mathbb{R}^3}\left(|f(\zeta)|(1+|\zeta|)^{\frac32-\gamma}\right)^\theta\left(|f(\zeta)|(1+|\zeta|)^{2+a-\gamma}\right)^{1-\theta}\\& \leq \Vert K(f)\Vert_{L^\infty_{\frac32-\gamma}}^\theta\Vert K(f)\Vert_{L^\infty_{2+a-\gamma}}^{1-\theta}\leq C\Vert f\Vert_{L^2}^\theta\Vert f\Vert_{L^\infty_a}^{1-\theta}.\label{interpoK}\end{split}
\end{equation} 

 Combining \eqref{L2Linfty}, \eqref{Linfty}, \eqref{phiainf} ,  \eqref{L2phialpha}, and \eqref{interpoK}, we have
 \begin{equation} \Vert\int_{\zeta_1>0}\phi_\alpha\frac{\nu(\zeta)}{|\zeta_1|^2}e^{-\frac{\nu(\zeta)}{|\zeta_1|}(x-s)}d\zeta_1\Vert_{L^\infty_h}\leq C C_a^{1-\theta}A_\alpha|x-s|^{-2+\frac12\theta}(1+\big|\ln|x-s|\big|)^{\frac12\theta},\label{KfLinfh}
\end{equation} Applying \eqref{KfLinfh} above with fixed $0<\theta<1$ and  $a$ large enough,
we have
\begin{equation}\begin{split}\label{B1est}
&{|B_1|}\leq|\int_0^x\int_{\zeta_{*1}>0}(1+|\zeta_*|)^{-h(\theta,\gamma,a)}|x-s|^{-(2-\frac12\theta)}(1+\big|\ln|x-s|\big|)^{\frac12\theta}|\\&(e^{-\frac{\nu(\zeta_*)}{|\zeta_{*1}|}|x-s|}-1)|f(s,\zeta_*)|d\zeta_*ds|\\&\leq{ C A_\alpha} 
|\int_0^x|x-s|^{-(2-\frac12\theta)}(1+\big|\ln|x-s|\big|)^{\frac12\theta}\\&\ \ \ \ \int_0^{|x-s|}\int_{\zeta_{*1}>0}\frac{\nu(\zeta_*)}{|\zeta_{*1}|}e^{-\frac{\nu(\zeta_*)}{|\zeta_{*1}|}t}(1+|\zeta_*|)^{-h(\theta,\gamma,a)}|f(s,\zeta_*)|d\zeta_*dtds|\\&\leq C A_\alpha\langle f\rangle
|\int_0^x|x-s|^{-(1-\frac12\theta)}(1+\big|\ln|x-s|\big|)^{1+\frac12\theta}ds| \leq CA_\alpha\langle f\rangle.\end{split}\end{equation}

 The proof for \eqref{lmGrad2} is similar and simpler. 
  Replacing $s$ in \eqref{Kexpres} by 0 and denoting these terms as $H'_i$s, we write

\begin{equation} \begin{split}&\int_{\zeta_1>0} \phi_\alpha\frac{1}{\zeta_1}e^{-\frac{\nu(\zeta)}{|\zeta_1|}x}\left(K(f)(x,\zeta)-K(f)(0,\zeta)\right)d\zeta\\&=\int_{\zeta_1>0} \phi_\alpha\frac{1}{\zeta_1}e^{-\frac{\nu(\zeta)}{|\zeta_1|}x}(H_1'+H_2'+H_3'+H_4')d\zeta\\&=:B_1'+B_2'+B_3'+B_4'.\end{split}\end{equation}  

Using the fact \begin{equation} |H_2'|\leq C|x|^\beta\Vert f\Vert_* ,\ \ |H_4'|\leq C|x|^\beta\Vert f\Vert_*,\end{equation} 
we have
\begin{equation}\begin{split}|B_2'+B_4'|&\leq \Vert f\Vert_*\int_{\zeta_1>0} \phi_\alpha\frac{1}{\zeta_1}e^{-\frac{\nu(\zeta)}{|\zeta_1|}x}|x|^\beta d\zeta \\&\leq C\Vert f\Vert_*\int_{\zeta_1>0} \phi_\alpha\frac{1}{\nu(\zeta)|\zeta_1|^{1-\beta}} d\zeta\leq C\Vert f\Vert_*.\end{split} \end{equation}

The treatment for $B_1'$ and $B_3'$ is similar, and therefore we present the case for $B_1'$ only. Changing the order of integration, we have
 
\begin{equation} \begin{split}&B_1'=\int_{\zeta_1>0} \phi_\alpha\frac{1}{\zeta_1}e^{-\frac{\nu(\zeta)}{|\zeta_1|}x}\int_{\zeta_{*1}>0}k(\zeta,\zeta_*)(e^{-\frac{\nu(\zeta_*)}{|\zeta_{*1}|}x}-1)f(0,\zeta_*)d\zeta_*d\zeta\\&=\int_{\zeta_{*1}>0} (e^{-\frac{\nu(\zeta_*)}{|\zeta_{*1}|}x}-1)f(0,\zeta_*)\left(\int_{\zeta_{1}>0}k(\zeta,\zeta_*)\phi_\alpha\frac{1}{\zeta_1}e^{-\frac{\nu(\zeta)}{|\zeta_1|}x}d\zeta_*\right)d\zeta.\end{split}\end{equation}  

Note that\begin{align}
&\Vert\phi_\alpha\frac{1}{\zeta_1}e^{-\frac{\nu(\zeta)}{|\zeta_1|}x}\Vert_{L^1}\leq CA_\alpha(1+\big|\ln x\big|), \\
&\Vert\phi_\alpha\frac{1}{\zeta_1}e^{-\frac{\nu(\zeta)}{|\zeta_1|}x}\Vert_{L^\infty_a}\leq C_aA_\alpha(\frac1x). \label{LinfBprime}
\end{align}
By interpolation, we have
\begin{equation}
\Vert\phi_\alpha\frac{1}{\zeta_1}e^{-\frac{\nu(\zeta)}{|\zeta_1|}x}\Vert_{L^2} \leq CA_\alpha(\frac1x)^{\frac12}(1+\big|\ln x\big|)^{\frac12}\label{L2Bprime}.
\end{equation}
Combining \eqref{L2Linfty}, \eqref{Linfty}, \eqref{LinfBprime}, \eqref{L2Bprime}, and \eqref{interpoK}, we have
\begin{equation}\Vert\int_{\zeta_1>0} k(\zeta,\zeta_*)\phi_\alpha\frac{1}{\zeta_1}e^{-\frac{\nu(\zeta)}{|\zeta_1|}x}d\zeta\Vert_{L^\infty_h}\leq
CC_a^{1-\theta}A_\alpha (\frac1x)^{1-\frac{\theta}2}(1+\big|\ln x\big|)^{\frac12\theta}. \end{equation}
Similar to \eqref{B1est}, for fixed $0<\theta<1$ and $a$ large enough, we have

\begin{equation} \begin{split}&|B_1'|\leq CA_\alpha\int_{\zeta_{*1}>0} (e^{-\frac{\nu(\zeta_*)}{|\zeta_{*1}|}x}-1)f(0,\zeta_*)(1+|\zeta_*|)^{-h(\theta,\gamma,a)}(\frac1x)^{1-\frac{\theta}2}(1+\big|\ln x\big|)^{\frac12\theta}d\zeta_*\\&\leq CA_\alpha (\frac1x)^{1-\frac{\theta}2}(1+\big|\ln x\big|)^{\frac12\theta}\int_0^x\int_{\zeta_{*1}>0}\frac{\nu(\zeta_*)}{|\zeta_{*1}|}e^{-\frac{\nu(\zeta_*)}{|\zeta_{*1}|}t}(1+|\zeta_*|)^{-h(\theta,\gamma,a)}|f(0,\zeta_*)|d\zeta_*dt\\&\leq CA_\alpha \langle f\rangle x^{\frac{\theta}2}(1+\big|\ln x\big|)^{1+\frac12\theta}\leq CA_\alpha \langle f\rangle .\end{split}\end{equation}

We still have to prove \eqref{H2Holder}.  We will present the proof for $H_2$ only and the one for $H_4$ is similar. We will first present the following lemma 
\begin{lemma}If  $f\in L^2_*$ and $\theta\in(\frac{2}{2+\gamma},1)$, then \label{lemmatheta}
\begin{equation}
\int\frac1{|\zeta_{*1}|^{2-2\theta}\nu(\zeta_*)^{2\theta}}|K(f)|^2d\zeta_*\leq C\Vert f\Vert_*^2.
\end{equation}
\end{lemma}

The proof Lemma \ref{lemmatheta} follows the idea of the one of  Lemma  4.2  in \cite{GP}. We  will present the proof at the end of this section to make this paper self-contained. 

With the Lemma \ref{lemmatheta} above, we have
 \begin{equation}\begin{split}|H_2|&=|\int^x_{s}\int_{\zeta_{*1}>0}k(\zeta,\zeta_*)\frac1{|\zeta_{*1}|}e^{-\frac{\nu(\zeta_*)}{|\zeta_{*1}|}|x-t|}K(f)(t,\zeta_*)d\zeta_*dt|\\&\leq  C(\int_s^x(\int_{\zeta_{*1}>0}|\frac1{|\zeta_{*1}|}e^{-\frac{\nu(\zeta_*)}{|\zeta_{*1}|}|x-t|}K(f)(t,\zeta_*)|^2d\zeta_*)^{\frac12}dt\\&
 \leq    C(\int_s^x\frac1{|x-t|^\theta}(\int_{\zeta_{*1}>0}|\frac1{|\zeta_{*1}|^{2-2\theta}\nu(\zeta_*)^{2\theta}}K(f)(t,\zeta_*)|^2d\zeta_*)^{\frac12}dt\\&\leq C|x-s|^{1-\theta}=C|x-s|^{\beta}. \end{split}\end{equation} \end{proof}
 
For $\zeta_1<0$, we can yield a similar lemma.  Together, we have

  \begin{lemma}\label{sigmapm}Suppose $f\in L^\infty_x\big([0,l],L_\zeta^*(\mathbb{R}^3)\big)$ is a solution to \eqref{localf} with Grad's angular cut-off potential with $0<\gamma\leq 1$ and $f(0,\zeta)\in  L^ \infty_\zeta(\mathbb{R}^{3+})$  and $f(l,\zeta)\in  L^ \infty_\zeta(\mathbb{R}^{3-}) $.
 Then,
  \begin{align}|\partial_x\sigma^+_\alpha(x)|&\leq C(\big|\ln|x|\big|+1),\label{sigmap}\\|\partial_x\sigma^-_\alpha(x)|&\leq C(\big|\ln|l-x|\big|+1),\label{sigmam}
\end{align}  where\begin{equation}\sigma^+_\alpha(x)=\int_{\zeta_1>0}\phi_\alpha f d\zeta,\ \ \sigma^-_\alpha(x)=\int_{\zeta_1<0}\phi_\alpha f d\zeta.\end{equation}\end{lemma}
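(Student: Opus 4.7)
The plan is to reduce Lemma \ref{sigmapm} to the three estimates already proven in Lemma \ref{lmGrad}. Differentiating $\sigma_\alpha^+(x) = \int_{\zeta_1>0} \phi_\alpha(\zeta) f(x,\zeta)\, d\zeta$ in $x$ and exchanging differentiation with integration (which is legitimate because the three terms on the right-hand side of \eqref{dfdxf} are, after integrating against $\phi_\alpha$, absolutely integrable in $\zeta$ uniformly in $x$ on compact subsets of $(0,l)$), I would substitute the representation \eqref{dfdxf} for $\partial_x f(x,\zeta)$ into the integral. This writes $\partial_x \sigma_\alpha^+(x)$ as the sum of exactly the three quantities bounded in \eqref{lmGrad1}--\eqref{lmGrad3}. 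Adding those three bounds gives $|\partial_x \sigma_\alpha^+(x)| \leq C(|\ln x|+1) A_\alpha \langle f\rangle$, which is \eqref{sigmap} (the $A_\alpha$ is absorbed into the constant for fixed $\alpha$).

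For \eqref{sigmam}, I would repeat the argument starting from the second branch of the integral equation \eqref{Integraleq1}, the one valid for $\zeta_1<0$, in which the boundary datum sits at $x=l$ and the kernel is $e^{-\frac{\nu(\zeta)}{|\zeta_1|}(s-x)}$. Differentiating in $x$ and performing the same reorganization as in \eqref{dfdxf}, but with $x$ replaced by $l-x$ and $0$ replaced by $l$ inside the exponentials and the $L(f)$ evaluation, yields three terms of the same structure. The analogue of Lemma \ref{lmGrad} for the region $\zeta_1<0$ follows line for line; this is the analysis that the author alludes to with the phrase ``the case for $B_3$ can be done similarly'' already used throughout Section \ref{section2}. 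The net effect is that the logarithmic singularity migrates from $x=0$ to $x=l$, giving $|\partial_x \sigma_\alpha^-(x)| \leq C(|\ln|l-x||+1)$.

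The only genuine obstacle is the justification of differentiation under the integral sign, since $\partial_x f(x,\zeta)$ has non-integrable factors $|\zeta_1|^{-1}$ and $|\zeta_1|^{-2}$ individually. The point is that after the reorganization \eqref{dfdxf} these factors only appear in combinations that Lemma \ref{lmGrad} controls integrably in $\zeta$; in particular, by the same computations used to prove \eqref{lmGrad1}--\eqref{lmGrad3}, each piece is dominated (uniformly in small perturbations of $x$ away from the boundary) by an integrable function of $\zeta$, so one may apply dominated convergence to interchange $\partial_x$ with $\int d\zeta$. Once this exchange is established, the lemma is immediate from Lemma \ref{lmGrad} and its mirror image for $\zeta_1<0$.
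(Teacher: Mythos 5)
Your proposal is correct and follows essentially the same route as the paper: the paper obtains Lemma \ref{sigmapm} precisely by summing the three estimates \eqref{lmGrad1}--\eqref{lmGrad3} of Lemma \ref{lmGrad} for $\sigma_\alpha^+$ and invoking the mirror-image lemma for $\zeta_1<0$ (with the singularity at $x=l$) for $\sigma_\alpha^-$. Your additional remark justifying the interchange of $\partial_x$ with $\int d\zeta$ via domination of the reorganized terms in \eqref{dfdxf} is a sound elaboration of a step the paper leaves implicit.
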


 \begin{proof}[Proof for Lemma \ref{lemmatheta}]
 We observe
 \begin{equation}\begin{split}&\int|Kf|^2d\zeta=\int\left(\int k(\zeta,\zeta_*)f(\zeta*)d\zeta*\int k(\zeta,\zeta')f(\zeta')d\zeta'\right)d\zeta\\&\leq C\Vert f\Vert_{L^2}\int | f(\zeta_*)|\int k(\zeta,\zeta_*)(1+|\zeta|)^{-(\frac32-\gamma)} d\zeta d\zeta_*\\ &\leq C\Vert f\Vert_{L^2}\int| f(\zeta_*)|(1+|\zeta*|)^{-(\frac72-2\gamma)}d\zeta_*\leq C\Vert f\Vert_{L^2}\Vert f\Vert_{*}\leq C\Vert f\Vert_{*}^2.\end{split}\end{equation}
Together with \eqref{L2Linfty},  we know $|Kf|^2\in L^\infty\cap L^1$. Interpolating between these two inequalities, we have
\begin{equation}\Vert |Kf|^2 \Vert_{L^p}\leq C \Vert f\Vert_*^2 \mbox{ for }  1 \leq p\leq \infty .\end{equation}
Therefore, we now only need to prove for  some proper $0<\theta<1$ and  H{\"o}lder conjugate of $p$, $p'\in [1,\infty]$, 
\begin{equation}
\int\frac1{|\zeta_{*1}|^{(2-2\theta)p'}|1+|\zeta_*|)|^{2\gamma\theta p'}}d\zeta_*<\infty,
\end{equation}
which yields the following condition:

\begin{equation}
(2-2\theta)p'<1;\ (2-2\theta+2\gamma\theta)p'>3.
\end{equation}
Such $p'$ exists if and only if
\begin{equation}
\theta<1,\ 3(2-2\theta)<2-2\theta+2\gamma\theta,
\end{equation}
which concludes Lemma \ref{lemmatheta}.
\end{proof}

\section{Asymptotic formula\label{section3}}In the precious section, we obtain an upper bound for $|\partial_x\sigma_\alpha|$, which diverges to infinity at boundary like a logarithmic function. Through the analysis, we also localize  the source of singularity,
which is the contribution from the first term on the right hand side in \eqref{dfdxf}. In this section, restricted   to the inverse-power potential with angular cut-off,  the goal is to further single out and factorize the singularity and form an asymptotic formula, i.e.,
\begin{lemma} \label{DsigmaS}Suppose $f(0,\zeta)\in L_*$ and $\nabla f(0,\zeta)\in L^p_\zeta(\mathbb{R}^{3+})$. Then, \begin{equation} \begin{split}\frac{\partial}{\partial x}\sigma_{\alpha1}^+:=&\int_{\zeta_1>0}\phi_{\alpha}
\frac{1}{|\zeta_1|}e^{-\frac{\nu(|\zeta|)}{|\zeta_1|}x}L(f)(0,\zeta)d\zeta\\&=-\ln x\int\int \phi_\alpha(0,\zeta_2,\zeta_3)L(f)(0,0^+,\zeta_2,\zeta_3)d\zeta_2d\zeta_3+O(A_\alpha \langle g \rangle'),\end{split}
\end{equation}  where
\begin{equation}
L(f)(0,0^+,\zeta_2,\zeta_3):=\lim_{\zeta_1\to 0^+}L(f)(0,\zeta_1,\zeta_2,\zeta_3).\end{equation}

\end{lemma}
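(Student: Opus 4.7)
The plan is to isolate the logarithmic singularity by freezing $L(f)(0,\zeta)$ at $\zeta_1 = 0^+$ and controlling the residual via Sobolev embedding. I would write
\begin{equation*}
L(f)(0,\zeta_1,\zeta_2,\zeta_3) = L(f)(0,0^+,\zeta_2,\zeta_3) + R(\zeta_1,\zeta_2,\zeta_3),
\end{equation*}
so that $\partial_x\sigma_{\alpha 1}^+ = I_{\mathrm{main}} + I_{\mathrm{rem}}$. The frozen piece yields the $-\ln x$ term after explicit $\zeta_1$-integration, while $R$ vanishes at $\zeta_1 = 0^+$ with a quantitative rate and will contribute only $O(A_\alpha \langle f\rangle')$.

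For $I_{\mathrm{main}}$, first observe that if $\alpha_1 \geq 1$ then $\phi_\alpha(0,\zeta_2,\zeta_3) = 0$ while $\zeta_1^{\alpha_1 - 1}$ is integrable at the origin, so both sides of the claim reduce to an $O(A_\alpha\langle f\rangle')$ term. Assume therefore $\alpha_1 = 0$, so that $\phi_\alpha(\zeta) = \phi_\alpha(0,\zeta_2,\zeta_3)\, e^{-\zeta_1^2/2}$, and confront the inner integral
\begin{equation*}
M(x,r) := \int_0^\infty \frac{e^{-\zeta_1^2/2}}{\zeta_1}\, e^{-\nu(|\zeta|)x/\zeta_1}\, d\zeta_1, \qquad r = \sqrt{\zeta_2^2+\zeta_3^2}.
\end{equation*}
The substitution $t = \nu(r) x/\zeta_1$ recasts the leading part as $\int_{\nu(r)x}^{\infty} t^{-1} e^{-t}\,dt = -\ln(\nu(r)x) + O(1)$, the standard exponential integral. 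The correction from replacing $\nu(|\zeta|)$ by $\nu(r)$ is controlled by $|\nu(|\zeta|) - \nu(r)| \lesssim \zeta_1^2 (1+r)^{\gamma - 2}$ and yields a uniformly bounded error. Thus $M(x,r) = -\ln x + G(x,r)$ with $|G(x,r)| \leq C(1 + |\ln(1+r)|)$; integrating $G$ against $\phi_\alpha(0,\zeta_2,\zeta_3) L(f)(0,0^+,\zeta_2,\zeta_3)$ is absorbed by the Maxwellian into $O(A_\alpha\langle f\rangle')$, leaving the advertised main contribution.

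For $I_{\mathrm{rem}}$, the hypothesis $\nabla f(0,\cdot) \in L^p_\zeta(\mathbb{R}^{3+})$ together with \eqref{dKLp} supplies $\partial_{\zeta_1} L(f)(0,\cdot) \in L^p_\zeta$ with norm controlled by $\langle f\rangle'$. A one-dimensional H\"older estimate applied to the fundamental theorem of calculus gives
\begin{equation*}
|R(\zeta_1,\zeta_2,\zeta_3)| \leq \zeta_1^{1 - 1/p}\, H(\zeta_2,\zeta_3),\quad H(\zeta_2,\zeta_3) := \Bigl(\int_0^\infty |\partial_{\zeta_1} L(f)(0,\zeta_1,\zeta_2,\zeta_3)|^p\, d\zeta_1\Bigr)^{1/p},
\end{equation*}
with $\|H\|_{L^p_{\zeta_2,\zeta_3}} \lesssim \langle f\rangle'$. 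Since $p > 1$, the factor $\zeta_1^{-1/p}$ is integrable at the origin, so $\int_0^\infty \zeta_1^{-1/p} e^{-\zeta_1^2/2} e^{-\nu x/\zeta_1} d\zeta_1$ is uniformly bounded in $x$; a final H\"older estimate in $(\zeta_2,\zeta_3)$ against the Gaussian in $\phi_\alpha(0,\zeta_2,\zeta_3)$ produces the factors $A_\alpha \langle f\rangle'$.

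The principal technical obstacle is the $L^p$-bound $\|\partial_{\zeta_1}L(f)(0,\cdot)\|_{L^p_\zeta} \lesssim \langle f\rangle'$. The term $\partial_{\zeta_1}(\nu f) = (\partial_{\zeta_1}\nu) f + \nu\,\partial_{\zeta_1}f$ combines $\nabla f(0,\cdot) \in L^p$ and the polynomial growth of $\nu,\partial_{\zeta_1}\nu$, tamed by the Maxwellian localization implicit in the outer weight $\phi_\alpha$; the term $\partial_{\zeta_1} K(f)(0,\cdot)$ invokes \eqref{dKLp} after the Gaussian weight places $f(0,\cdot)$ into $L^p$. A secondary subtlety is justifying the replacement $\nu(|\zeta|)\mapsto\nu(r)$ in $M$, which requires uniform mean-value control of $e^{-\nu(|\zeta|)x/\zeta_1} - e^{-\nu(r)x/\zeta_1}$ and integrability of the resulting error against $e^{-\zeta_1^2/2}/\zeta_1$.
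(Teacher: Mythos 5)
Your route is genuinely different from the paper's and is workable in outline. The paper passes to spherical coordinates, integrates by parts in $z=\cos\theta$ against the antiderivative $H(z,x)=-\int_z^1 u^{-1}e^{-\nu(\rho)x/(\rho u)}du$, identifies the boundary term at $z=0$ with the exponential integral $E_1(\nu(\rho)x/\rho)$ (whose expansion gives the $-\ln x$ factor), and bounds the remaining term using $|H(z,x)|\le|\ln z|$, H\"older in $z$, and the hypothesis $\nabla f(0,\cdot)\in L^p$ together with \eqref{dKLp}. You instead freeze $L(f)$ at $\zeta_1=0^+$, evaluate the one-dimensional $\zeta_1$-integral directly as an exponential integral, and control the residual $R=\int_0^{\zeta_1}\partial_{\zeta_1}L(f)\,dt$ by the H\"older bound $|R|\le\zeta_1^{1-1/p}H(\zeta_2,\zeta_3)$; the integrable weight $\zeta_1^{-1/p}$ plays the role of the paper's $|\ln z|$. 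Both arguments rest on the same two pillars (the $E_1$ asymptotics and an $L^p$ bound on $\partial_{\zeta_1}L(f)(0,\cdot)$ via \eqref{dKLp}), and your case split $\alpha_1\ge1$ versus $\alpha_1=0$ is a legitimate substitute for the paper's uniform integration by parts.

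Two points need repair. First, as displayed, your key intermediate claim $\Vert H\Vert_{L^p_{\zeta_2,\zeta_3}}\lesssim\langle f\rangle'$, i.e. $\Vert\partial_{\zeta_1}L(f)(0,\cdot)\Vert_{L^p_\zeta(\mathbb{R}^{3+})}\lesssim\langle f\rangle'$ \emph{without} weight, is not true: $\partial_{\zeta_1}L(f)$ contains $\nu\,\partial_{\zeta_1}f$, and multiplying an $L^p$ function by the unbounded factor $\nu\sim(1+|\zeta|)^\gamma$ need not stay in $L^p$ (similarly $(\partial_{\zeta_1}\nu)f$ for small $p$). You acknowledge the Maxwellian taming in prose, but your chain of inequalities never uses it, because you strip the Gaussian off before taking the $\zeta_1$-norm. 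The fix is exactly what the paper does: keep a Gaussian factor (e.g. $e^{-|\zeta|^2/8}$, borrowed from $|\phi_\alpha|\le CA_\alpha e^{-|\zeta|^2/4}$) inside the $L^p$ norm of $\partial_{\zeta_1}L(f)$ before applying H\"older and \eqref{dKLp}. Second, your replacement of $\nu(|\zeta|)$ by $\nu(r)$ uses $|\nu(|\zeta|)-\nu(r)|\lesssim\zeta_1^2(1+r)^{\gamma-2}$, which fails near $r=0$ (there $|\zeta|-r\simeq\zeta_1$, not $\zeta_1^2$); the correct and sufficient bound is $|\nu(|\zeta|)-\nu(r)|\lesssim\zeta_1(1+|\zeta|)^{\gamma-1}$, and it requires a derivative estimate for $\nu$ that the paper never states among its hypotheses (it holds for the cut-off power-law kernels, but it is an extra ingredient). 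The paper's spherical-coordinate formulation sidesteps this entirely, since $\nu$ depends only on $\rho$ and no such replacement is needed.
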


   \begin{proof}
  
  If we change to spherical coordinates so that \[\zeta=(\rho\cos\theta, \rho\sin\theta\cos\phi,\rho\sin\theta\sin\phi),\] we have
  
  \begin{equation}\frac{\partial}{\partial x}\sigma_{\alpha1}^+=\int_0^\infty\int_0^{2\pi}\int_0^{\frac{\pi}{2}}\label{sigmapsh}
\frac{1}{\rho\cos\theta}e^{-\frac{\rho^2}2}e^{-\frac{\nu(\rho)}{\rho\cos\theta}x}\bar{F}(\rho,\theta,\phi)\rho^2\sin\theta d\theta d\phi d\rho,\end{equation}
  where 
  \begin{equation}\bar{F}(\rho,\theta,\phi)=\pi^{-\frac34}\zeta_1^{\alpha_1}\zeta_2^{\alpha_2}\zeta_3^{\alpha_3}L(f)(0,\zeta)=:g(\zeta).\end{equation}
  Further letting  $z=\cos\theta$, we obtain 
   \begin{equation}\frac{\partial}{\partial x}\sigma_{\alpha1}^+=\int_0^\infty\int_0^{2\pi}\left[\int_0^{1}\label{sigmapshz}
\frac{1}{z}e^{-\frac{\nu(\rho)}{\rho z}x}{F}(\rho, z,\phi) d z \right]e^{-\frac{\rho^2}2}\rho d\phi d\rho,\end{equation}
  where 
  \begin{equation}F(\rho,z ,\phi)=\bar{F}(\rho,\cos^{-1}z,\phi).\end{equation}
  
  Here, we introduce a well-know special function, exponential integral,
  \begin{equation} E_1(x)=\int_0^1\frac1{z}e^{-\frac{x}{z}}dz.\end{equation}
  The $E_1(x)$ has the following  properties, \cite{handbook}:
\begin{equation}
E_{1}(x)=-\gamma-\ln x+\sum_{k=1}^{\infty}\frac{(-1)^{k+1}x^{k}}{k\cdot k!},\label{expandion}
\end{equation}

\begin{equation}
\frac{1}{2}e^{-x}\ln(1+\frac{2}{x})\leq E_{1}(x)\leq e^{-x}\ln(1+\frac{1}{x})\ \ \mbox{for}\ x>0.\label{e1far}
\end{equation}
From the properties above, we have 
\begin{equation}
E_{1}(x)=-\ln x+O(1)\label{E1o1}\  \mbox{, for } 0<x\leq1,
\end{equation}

  Let \begin{equation} H(z,x)=-\int_z^1\frac{1}{u}e^{-\frac{\nu(\rho)}{\rho u}x}du.\end{equation}
  Notice that  $\frac{\partial}{\partial z} H(z,x)=\frac{1}{z}e^{-\frac{\nu(\rho)}{\rho z}x}$and $H(0,x)=E_1(\frac{\nu(\rho)}{\rho }x)$. 
  Performing integration by parts for the inner most integral in \eqref{sigmapshz}, we obtain
  \begin{equation}\begin{split}&\int_0^1\left(\frac{\partial}{\partial z}H(z)\right)F(\rho,z,\phi)dz\\&=E_1(\frac{\nu(\rho)}{\rho }x)F(\rho,0,\phi)-\int_0^1H(z)\left(\frac{\partial}{\partial z}F(\rho,z,\phi)\right)dz\\.\end{split}\end{equation}
  The first  term on the right hand side above is the source of singularity and will be explained in detail later. We will prove the contribution from the second term above is finite.  
  Let \begin{align}I:&=\int_0^\infty\int_0^{2\pi}E_1(\frac{\nu(\rho)}{\rho}x)F(\rho,0,\phi)e^{-\frac{\rho^2}2}\rho d\phi d\rho.\label{Isigular},\\II:&=\int_0^\infty\int_0^{2\pi}\int_0^1H(z)\left(\frac{\partial}{\partial z}F(\rho,z,\phi)\right)dz e^{-\frac{\rho^2}2}\rho d\phi d \rho.&\end{align}
  
  Notice that 
  \begin{equation} |H(z,x)|\leq |\ln z|,\end{equation} 
  
  \begin{equation}\frac{\partial}{\partial z} F(\rho, z,\phi)=\rho\frac{\partial}{\partial \zeta_1}g+\frac{\rho z}{\sqrt{1-z^2}}\cos\phi \frac{\partial}{\partial \zeta_2}g+\frac{\rho z}{\sqrt{1-z^2}}\sin\phi\frac{\partial}{\partial \zeta_3}g.\end{equation} 
  
  We have \begin{equation} \begin{split}\left|II\right| &\leq \left|\int_0^\infty\int_0^{2\pi}\int_0^1H(z)\left(\frac{\partial}{\partial z}F(\rho,z,\phi)\right)dz e^{-\frac{\rho^2}2}\rho d\phi d \rho\right| \\&\leq \left|\int_0^\infty\int_0^{2\pi}\left(\int_0^1C(|\frac{\partial}{\partial \zeta_3}g|+|\frac{\partial}{\partial \zeta_2}g|)dz +\int_0^1|\ln z||\frac{\partial}{\partial \zeta_1}g| dz \right)e^{-\frac{\rho^2}2}\rho^2 d\phi d \rho\right|\\ &\leq CA_\alpha \int_{\zeta_1>0}e^{-\frac{|\zeta|^2}8}\left(|L(0,\zeta)|+|\frac{\partial}{\partial \zeta_2}L(0,\zeta)|+|\frac{\partial}{\partial \zeta_3}L(0,\zeta)|\right) d\zeta\\&+A_\alpha\left|\int_0^\infty\int_0^{2\pi}\left(\int_0^1|\ln z|^q dz \right)^{\frac1q}\left(\int_0^1|\frac{\partial}{\partial \zeta_1}L(0,\zeta)|^p dz \right)^{\frac1p}e^{-\frac{\rho^2}4}\rho^2 d\phi d \rho\right|,\end{split}\end{equation}
  
  where $p>1$ and $\frac1p+\frac1q=1$.
  
  Let the second term on the right hand side above be $II'$.
  \begin{equation}\begin{split} &|II'| \leq \left(\int_0^\infty\int_0^{2\pi}\left(\int_0^1|\ln z|^q dz \right)e^{\frac{q\rho^2}8}\rho^2 d\phi d \rho\right)^{\frac1q}\left(\int_0^\infty\int_0^{2\pi}\left(\int_0^1|\frac{\partial}{\partial \zeta_1}L(0,\zeta)|^p dz \right)e^{\frac{p\rho^2}8}\rho^2 d\phi d \rho\right)^{\frac1p}\\&\leq C_p A_\alpha\left( \int_{\zeta_1>0}|\frac{\partial}{\partial \zeta_1}L(0,\zeta)|^p e^{-\frac{p|\zeta|^2}8} d\zeta\right)^{\frac1p}\end{split} .\end{equation}
  Therefore,\begin{equation}\begin{split}|II|&\leq CA_\alpha \int_{\zeta_1>0}e^{-\frac{|\zeta|^2}8}\left(|L(0,\zeta)|+|\frac{\partial}{\partial \zeta_2}L(0,\zeta)|+|\frac{\partial}{\partial \zeta_3}L(0,\zeta)|\right) d\zeta\\&+ C_p A_\alpha\left( \int_{\zeta_1>0}|\frac{\partial}{\partial \zeta_1}L(0,\zeta)|^p e^{-\frac{p|\zeta|^2}8} d\zeta\right)^{\frac1p},\end{split}\end{equation}
  where $p>1$.
  
  Using the assumption and \eqref{dKLp}, we have
  \begin{equation}|II|\leq C A_\alpha(\Vert\nabla f(0,\zeta)\Vert_{L^p_\zeta(\mathbb{R}^{3+})}+\Vert  f(0,\zeta)\Vert_*)\end{equation}
  
  Finally, we are going to extract the singularity from  $I$.
   We let \begin{equation}\rho_0=\sup\{\rho|\frac{\nu(\rho)}\rho x>1\}.\end{equation}
 We devided the domain of integration of $I$ in \eqref{Isigular} into two, $0\leq\rho\leq \rho_0$ and $\rho_0\leq \rho$, and denoted the integral as $I_s$ and $I_l$ correspondently.  
 Note that if $0\leq \rho<\rho_0$, then  \begin{equation}\frac{\nu(\rho)}{\rho}x> \frac{c_0}{c_1}.\end{equation} Applying \eqref{e1far}
, we have  \begin{equation}
 |I_s|\leq C A_\alpha \langle f \rangle.\end{equation}   Using the asymptotic formula \eqref{E1o1}, We obtain
   \begin{equation}\begin{split}I_l&=-\ln(x) \int_{\rho_0}^\infty\int_0^{2\pi}F(\rho,0,\phi)e^{-\frac{\rho^2}2}\rho d\phi d\rho+O(A_\alpha\langle f\rangle)\\&= -\ln x\int\int \phi_\alpha(0,\zeta_2,\zeta_3)L(f)(0,0^+,\zeta_2,\zeta_3)d\zeta_2d\zeta_3+O\left(A_\alpha \langle f \rangle(1+\rho_0^2|\ln x| )\right).\end{split}
   \end{equation}
   The remaining task is to estimate $\rho_0$. If we assume $x\leq\frac1{2c_1}$,  then\begin{equation}\label{nurho}
   1=\frac{\nu(\rho_0)}{\rho_0}x\leq c_1\frac{(1+\rho_0)^\gamma}{\rho_0}x\leq \frac{(1+\rho_0)^\gamma}{2\rho_0}.\end{equation}
   If $\gamma=1$, we see $\rho_0\leq 1.$
   Observe that, for $0\leq\gamma<0$,
   $ \rho$  grows faster then the 
  $\frac12(1+\rho)^\gamma$ as $\rho\to \infty$. Therefore, \eqref{nurho} implies $\rho_0\leq m$ for some $m<\infty$. 
  Therefore, we have
 \begin{equation}\label{nurho}
   1=\frac{\nu(\rho_0)}{\rho_0}x\leq c_1\frac{(1+m)^\gamma}{\rho_0}x.\end{equation}   
   We have
   \begin{equation}
   |\rho_0^2\ln x|\leq Cx^2|\ln x| \leq C
      \end{equation}
      and conclude the lemma.\end{proof}
      
      Finally, combining \eqref{lmGrad2} and \eqref{lmGrad3} in Lemma \ref{lmGrad}, \eqref{sigmam} in Lemma \ref{sigmapm}, and Lemma \ref{DsigmaS}, we concludes Theorem \ref{mainthm}.

\end{document}